\theoremstyle{definition} 
\newcommand{\thistheoremnam}{}
\newtheorem*{genericthm*}{\thistheoremnam}
\newenvironment{chapt*}[1]
 {\renewcommand{\thistheoremnam}{#1}%
 \begin{genericthm*}}
 {\end{genericthm*}}
\newtheorem{thm}{Theorem}[section]
\newtheorem{pop}[thm]{Proposition}
\newtheorem{defi}[thm]{Definition}
\newenvironment{introtheorem}[1]{%
 \manualtheoreminner
}{\endmanualtheoreminner}
\theoremstyle{definition}
\newtheorem{rem}[thm]{Remark}
\numberwithin{equation}{section}
\newcommand{\LpLS}{Lorentzian pre-length space }
\newcommand{\LpLSn}{Lorentzian pre-length space}
\newcommand{\mb}[1]{\mathbb{#1}}
\renewcommand{\labelenumi}{(\roman{enumi})}
\renewcommand\theenumi\labelenumi
\title{Characterizing intrinsic Lorentzian length spaces via $\tau$-midpoints}
\author{Tobias Beran\thanks{{\tt tobias.beran@univie.ac.at}, Faculty of Mathematics, University of Vienna, Austria.}\ \ and Felix Rott\thanks{{\tt felix.rott@univie.ac.at}, Faculty of Mathematics, University of Vienna, Austria.}}
\date{}
\begin{document}
\maketitle
 
\begin{abstract}
In metric geometry, the question of whether a distance metric is given by the length of curves can be decided via the existence of midpoints with respect to the metric $d$. We adapt a similar characterization to the setting of Lorentzian pre-length spaces. In particular, we show that a given space is strictly intrinsic provided it has $\tau$-midpoints and merely intrinsic provided it has approximate $\tau$-midpoints. Our approach is based on the null distance of C.\ Sormani and C.\ Vega. 

\medskip
\noindent
\emph{Keywords:} Lorentzian length spaces, Lorentzian geometry, metric geometry, null distance, time functions, midpoints
\bigskip

\noindent
\emph{MSC2020:} 53C23, 51K10
\end{abstract}

\tableofcontents

\section{Introduction}

Metric geometry and the theory of Alexandrov spaces is a synthetic and axiomatic approach to Riemannian geometry, pioneered by \cite{Ale57}. The main object of study is a so-called \emph{length space}, a metric space where the distance is given by the infimum of lengths of connecting curves. 
The fundamental geometric concepts of Riemannian manifolds, such as angles, curvature bounds and tangent spaces are abstracted to the setting of metric spaces, and in this way open up a far-reaching generalization of to Riemannian geometry. 
\medskip

In the same spirit, the theory of Lorentzian length spaces is a synthetic approach to Lorentzian geometry. This is a rather new research area introduced in \cite{KS18}. The advancements in the synthetic Lorentzian setting can be roughly traced back to two different motivations (with some overlap). 
On the one hand, there is the attempt to solidify Lorentzian length spaces as a synthetic setting in its own right, i.e., translating concepts from metric geometry to Lorentzian length spaces. These include, but are of course not limited to, timelike curvature bounds in the sense of triangle comparison (generalizations of sectional curvature bounds), angles and (timelike) tangent cones, gluing techniques and other fundamental constructions, Hausdorff measure, and synthetic Ricci curvature bounds, cf.\ \cite{KS18, BS22, BOS22, BR22, Rot22, BORS23, BNR23, MS22, CM20,BKR23}. 
On the other hand, there is the goal of adapting useful concepts from the theory of relativity, or concepts and behaviours from Lorentzian geometry which might not be present in the Riemannian counterpart, to the synthetic setting. Examples include causality theory, (in)extendibility of spaces, warped products, time functions and the null distance, cf.\ \cite{ACS20, GKS19, AGKS19, BGH21, KS22}. 
\medskip

This work can be placed in the first category, as we are aiming to adapt a characterization of intrinsic distance metrics. In general, it is complicated to verify whether a given metric is given via the length of curves. 
Fortunately, there are numerous characterizations which are easier to check and still ensure that a metric is intrinsic, see e.g.\ \cite{BBI01,Pap14}. In particular, there is the well known characterization using so-called midpoints: a complete metric space is strictly intrinsic (or geodesic), i.e., there exists a shortest curve between any two points, if and only if there exists a midpoint between any pairs of points, i.e., a point which has half the total distance from both points, so visually it is situated exactly in the middle between these two points (precise definitions follow in the next section). 
For proper metric spaces, this notion is equivalent to Menger-convexity, cf.\ \cite[Theorem 2.6.2]{Pap14}. 
An analogous formulation also exists for merely intrinsic metrics and the existence of $\varepsilon$-midpoints, which can morally be described as ``almost-midpoints''. As an example, the fact that the Gromov-Hausdorff limit of length spaces is again a length spaces is very easy using this characterization, cf.\ \cite[Theorem 7.5.1]{BBI01}. 
\medskip

The proof of this equivalent description in the metric case is not difficult. 
However, the translation to the Lorentzian setting is far from straightforward, since, roughly speaking, the distance metric \emph{induces} the topology while the time separation function is in comparison barely involved with the topology. 
To overcome this, we rely on some compatibility with a given time function (more on this later). We now mention the main results of this work.
\begin{introtheorem}{\ref{thm: midpoints imply strictly intrinsic}}[$\tau$-midpoints imply being causally geodesic]
Let $X$ be a locally causally closed and sufficiently causally connected \LpLSn. Let $T: X \to \mb{R}$ be a locally anti-Lipschitz time function and suppose that $X$ is equipped with the null-distance $d_T$ with respect to $T$. 
Assume that $(X,d_T)$ is complete and that $X$ has $\tau$-midpoints. 
Assume further that there exists $c \in (0,\frac{1}{2}]$ such that the time function is compatible with midpoints in the following sense: for any $p, q \in X, p \ll q$ there is a $\tau$-midpoint $m$ of $p$ and $q$ such that 
\begin{equation*}
(1-c) T(p) + c T(q) \leq T(m) \leq c T(p) + (1-c) T(q) \, . 
\end{equation*}
Then $X$ is causally geodesic.
\end{introtheorem}
\begin{introtheorem}{\ref{thm: approx midpoints imply intrinsic}}[Approximate $\tau$-midpoints imply being intrinsic]
Let $X$ be a locally causally closed and sufficiently causally connected \LpLSn. Let $T: X \to \mb{R}$ be a time function and suppose that $X$ is equipped with the null-distance $d_T$ with respect to $T$. Assume that $(X,d_T)$ is complete and that $X$ has approximate $\tau$-midpoints. Assume further that there exists $c \in (0,\frac{1}{2}]$ and $\hat{\varepsilon}>0$ such that the time function is compatible with approximate $\tau$-midpoints in the following sense: for any $p, q \in X, p \ll q$ there is a $\tau(p,q)\hat{\varepsilon}$-$\tau$-midpoint $m$ of $p$ and $q$ such that 
\begin{equation*}
(1-c) T(p) + c T(q) \leq T(m) \leq c T(p) + (1-c) T(q) \, .
\end{equation*}
Then $X$ is intrinsic.
\end{introtheorem}

\section{Preliminaries}
In this section, we gather some fundamental background knowledge used to build up the main results. 
More precisely, we recall some properties of Lorentzian pre-length spaces, the null distance, and we repeat the metric characterization of being intrinsic via midpoints. 
We assume the reader to be familiar with the very basic concepts concerning the theory of Lorentzian length spaces, referring to \cite{KS18} for details. 
To begin with, recall that a \LpLS $X$ is called \emph{chronological} if $\tau(x,x)=0$ for all $x \in X$. 
Moreover, $X$ is called \emph{intrinsic} if $\tau$ is given by the supremum of lengths of causal curves, and \emph{strictly intrinsic} or \emph{geodesic} if for all timelike related pairs of points there exists a distance realizer between them. 
If there also exists a causal curve between any pair of null related points (which then necessarily is null and realizes the distance), then $X$ is called \emph{causally strictly intrinsic} or \emph{causally geodesic}. In particular, a causally path-connected and geodesic space is causally geodesic. 
\begin{defi}[Local causal closure]
 A \LpLS $X$ is called locally causally closed if every point has a neighbourhood $U$ such that $\leq|_{\bar{U} \times \bar{U}}$ is closed, i.e., if $p_n \to p, q_n \to q, p_n, q_n \in U, p, q \in \bar{U}, p_n \leq q_n$, then $p \leq q$.
\end{defi}
Note that there is the closely related notion of being \emph{locally weakly causally closed} introduced in \cite[Definition 2.19]{ACS20}. Under strong causality, these are equivalent, cf.\ \cite[Proposition 2.21]{ACS20}. 
At first glance this version seems more adequate for our setting, as we deal with intrinsic (or at least causally path-connected) spaces. However, since we do not assume strong causality, this property is not suitable for our purposes, see Remark \ref{rem: on loc caus closure}. 
Next, we give the definition of $\tau$-midpoints and approximate $\tau$-midpoints. 
\begin{defi}[(Approximate) $\tau$-midpoints]
 Let $X$ be a \LpLS and let $x \ll z$. 
 \begin{itemize}
 \item[(i)] $y \in X$ is called \emph{$\tau$-midpoint} of $x$ and $z$ if $\tau(x,y)=\frac{1}{2}\tau(x,z)=\tau(y,z)$. 
 \item[(ii)] Let $\varepsilon>0$. $y \in X$ is called \emph{$\varepsilon$-$\tau$-midpoint} of $x$ and $z$ if 
 \begin{equation}
 \label{eq: epsilon-midpoints definition}
 |\tau(x,y) - \frac{1}{2}\tau(x,z)|<\varepsilon \quad \text{and} \quad |\tau(y,z) - \frac{1}{2}\tau(x,z)|<\varepsilon \, . 
 \end{equation}
 
 \item[(iii)] $X$ is said to have \emph{$\tau$-midpoints} if any pair of timelike related points has a $\tau$-midpoint and $X$ is said to have \emph{approximate $\tau$-midpoints} if any pair of timelike related points has an $\varepsilon$-$\tau$-midpoint for all $\varepsilon>0$.
 \end{itemize}
\end{defi}
In Lorentzian geometry, it has always been difficult to relate the undyerlying topology of the manifold to the Lorentzian metric. This is in contrast to Riemannian geometry, where it is well known that the infimum of lengths of connecting geodesics induces a metric in the topological sense, which even induces the manifold topology. The null distance, due to C.\ Sormani and C.\ Vega, is a rather recent attempt of obtaining a similar interplay for Lorentzian manifolds using time functions and piecewise causal curves with alternating time orientation. 
It was first introduced in \cite{SV16} and subsequently adapted to the synthetic setting in \cite{KS22}. 
\begin{defi}[Time functions and being locally anti-Lipschitz]
 Let $X$ be a \LpLSn. 
 \begin{itemize}
 \item[(i)] A function $T : X \to \mb{R}$ is called \emph{generalized time function} if $x < y \Rightarrow f(x) < f(y)$.
 \item[(ii)] A continuous generalized time function is called a \emph{time function}.
 \item[(iii)] A generalized time function $T:X \to \mb{R}$ is called \emph{locally anti-Lipschitz} if every point has a neighbourhood $U$ together with a metric $d_U : U \times U \to \mb{R}$ such that for all $x,y \in U$ with $x \leq y$ we have $T(y) - T(x) \leq d_U(x,y)$.
 \end{itemize}
\end{defi}
\begin{defi}[Piecewise causal curves and the null distance]
 Let $X$ be a \LpLS and let $T$ be a generalized time function. 
 \begin{itemize}
 \item[(i)] A curve $\gamma : [a,b] \to X$ is called \emph{piecewise causal} if there exists a partition $a=t_0 < t_1 < \ldots < t_n = b$ such that $\gamma|_{[t_i, t_{i+1}]}$ is a future-directed or past-directed causal curve for all $i$.
 \item[(ii)] The \emph{null length} of a piecewise causal curve $\gamma$ is defined as 
 $L_{T}(\gamma) \coloneqq \sum_{i=0}^{n-1} |T(\gamma(t_{i+1}) - T(\gamma(t_i))|$.
 \item[(iii)] The \emph{null distance} between $p,q \in X$ is defined as 
 \begin{equation}
 \label{eq: null distance}
 d_T(p,q) \coloneqq \inf \{ L_T(\gamma) \mid \text{$\gamma$ is piecewise causal from $p$ to $q$}\} \, .
 \end{equation}
 \end{itemize}
\end{defi}
In order to have a reasonably behaved null distance, we have to assume that $X$ possesses some elementary properties.
\begin{defi}[Sufficient causal connectedness]
 A \LpLS $X$ is said to be \emph{sufficiently causally connected} if it is path-connected, causally path-connected and every point lies on a timelike curve.
\end{defi}
It then follows, cf.\ \cite[Lemma 3.5]{KS22}, that any two points can be joined by a piecewise causal curve. In particular, the infimum in \eqref{eq: null distance} is never empty and we also have 
\begin{equation}
 \label{eq: null distance between causal points}
 p \leq q \Rightarrow d_{T}(p,q)=T(q)-T(p) \, , 
\end{equation}
cf.\ \cite[Proposition 3.8(ii)]{KS22}.

The following properties of the null distance are crucial to our approach. They essentially guarantee that, under reasonable conditions, we can assume any \LpLS to be equipped with the null distance instead of an arbitrary metric, without losing any generality. 

\begin{thm}[Crucial properties of the null distance]
Let $X$ be a sufficiently causally connected and locally compact Lorentzian pre-length space equipped with a locally anti-Lipschitz time function $T$. 
Then $(X,d_T)$ is a metric space which is homeomorphic to $(X,d)$. 
\end{thm}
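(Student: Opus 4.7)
The plan is to verify the metric axioms for $d_T$ first and then establish the homeomorphism. Symmetry $d_T(p,q) = d_T(q,p)$ is immediate: any piecewise causal curve from $p$ to $q$ can be traversed in reverse, and since $L_T$ is defined via absolute values of $T$-differences, the null length is preserved. The triangle inequality follows from concatenating piecewise causal curves $p \leadsto r$ and $r \leadsto q$ (which is again piecewise causal after merging adjacent segments of equal orientation) and observing that null lengths add. Sufficient causal connectedness (by \cite[Lemma 3.5]{KS22}, as recalled in the excerpt) guarantees that the infimum is over a non-empty set, so $d_T$ takes values in $[0, \infty)$.

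The nontrivial metric axiom is definiteness. Suppose $p \neq q$. Pick an anti-Lipschitz neighborhood $U$ of $p$ with auxiliary metric $d_U$, and a $d_U$-ball $B$ around $p$ whose closure is compact in $U$ and does not contain $q$. For any piecewise causal curve $\gamma$ from $p$ to $q$ with partition $t_0 < \dots < t_n$, each segment $\gamma|_{[t_i, t_{i+1}]}$ is monotone in $T$, and while the segment remains in $U$ the anti-Lipschitz condition, applied to a fine subpartition of the causal segment (any two points on a causal segment are causally related), yields $|T(\gamma(t_{i+1})) - T(\gamma(t_i))| \geq d_U(\gamma(t_i), \gamma(t_{i+1}))$ only in the wrong direction — so the strategy is instead to use the contrapositive of the definiteness of $d_U$: consider the first exit time of $\gamma$ from $B$ and use the anti-Lipschitz inequality together with local compactness to extract a lower bound on $L_T(\gamma)$ in terms of the radius of $B$. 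This forces $d_T(p, q) > 0$.

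For the homeomorphism, I would work locally around each $p \in X$. One direction, $d_T$-convergence implies $d$-convergence, uses the local quantitative estimate just described: on a small neighborhood of $p$, $d_T$-smallness forces the piecewise causal curves to remain in $U$, and a careful bookkeeping of the anti-Lipschitz bounds over the partition points gives $d_U(p,q) \leq C\, d_T(p,q)$ for some constant depending on the chosen neighborhood (this is essentially the standard argument from \cite{SV16}, adapted to the synthetic setting in \cite{KS22}). Since $d_U$ generates the subspace topology on $U$ (part of the locally anti-Lipschitz setup), this gives $d$-convergence. The converse direction, $d$-convergence implies $d_T$-convergence, is the main obstacle: one needs to exhibit, for each $q$ close to $p$ in $d$, an explicit short piecewise causal curve from $p$ to $q$. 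Here I would use sufficient causal connectedness to fix auxiliary points $p^\pm$ with $p^- \ll p \ll p^+$ on a timelike curve through $p$; local compactness and the openness of the chronological relation yield that any $q$ sufficiently $d$-close to $p$ satisfies $p^- \ll q \ll p^+$, so the zigzag $p \leadsto p^+ \leadsto q \leadsto p^- \leadsto p$ (appropriately truncated) is piecewise causal, and its null length tends to $0$ as $p^\pm \to p$ by continuity of $T$.

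The main difficulty will be making the second direction uniform enough to deduce continuity of the identity map $(X,d) \to (X, d_T)$ at every point: the choice of auxiliary points $p^\pm$ depends on $p$, and the shrinkage of the zigzag's null length must be controlled as $d(p,q) \to 0$ rather than as $p^\pm \to p$. This is handled by first fixing $p^\pm$ to get piecewise causal access, then letting $q \to p$ along the $d$-topology and invoking continuity of $T$ plus an explicit estimate of $L_T$ on the zigzag to conclude. Once both directions are established, $(X, d_T)$ is a metric space with the original topology, as claimed.
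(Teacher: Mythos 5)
The paper does not actually prove this theorem; its ``proof'' is a citation of \cite[Propositions 3.9, 3.10 and 3.12]{KS22}. Your sketch reproduces the architecture of those cited proofs --- symmetry and the triangle inequality by reversal and concatenation, non-degeneracy from the anti-Lipschitz condition plus an exit-time argument, and the homeomorphism via two local comparisons --- so the route is the intended one. There is, however, a genuine gap in your definiteness step, and it propagates into the first half of your homeomorphism argument.

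You correctly observe that the inequality $T(y)-T(x)\leq d_U(x,y)$, as the paper states the locally anti-Lipschitz condition, points ``the wrong way'': it bounds the $T$-increments from above, and no manipulation (``contrapositive of the definiteness of $d_U$'', ``first exit time'') can extract a lower bound on $L_T(\gamma)$ from an upper bound on each of its summands. As written, your workaround is not an argument, and with only an upper bound on $T$-increments the conclusion can genuinely fail: nothing then prevents $T$ from growing much more slowly than $d_U$ along causal curves, so that distinct points acquire null distance zero (Sormani and Vega exhibit such degenerate examples in \cite{SV16}). The resolution is that the paper's definition has the inequality reversed relative to \cite{SV16} and \cite{KS22}; the anti-Lipschitz condition is $d_U(x,y)\leq T(y)-T(x)$ for $x\leq y$ in $U$. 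With that orientation your intended argument is the standard one and does work: refine the partition of a piecewise causal curve so that, up to its first exit from a ball $B$ with $\bar{B}\subset U$ compact, consecutive partition points lie in $U$ (refining within a causal segment does not change $L_T$, since $T$ is monotone there); each summand of $L_T$ then dominates the corresponding $d_U$-distance, and the triangle inequality for $d_U$ gives $L_T(\gamma)\geq r$ if the curve leaves $B$, respectively $L_T(\gamma)\geq d_U(p,q)>0$ if it does not. The same estimate yields $d_U(p,q)\leq d_T(p,q)$ locally, i.e., continuity of the identity from $(X,d_T)$ to $(X,d)$, so your claimed bound $d_U(p,q)\leq C\,d_T(p,q)$ also needs the corrected orientation (and then holds with $C=1$). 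Your converse direction --- the zigzag through auxiliary points $p^-\ll p\ll p^+$, using openness of $I^{\pm}$ (which follows from lower semicontinuity of $\tau$) and continuity of $T$ to make the null length small --- is correct and matches \cite[Proposition 3.12]{KS22}.
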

\begin{proof}
 See \cite[Propositions 3.9, 3.10 and 3.12]{KS22}.
\end{proof}
Recall that the metric distance on $X$ is only used as a background tool (replacing the role of any Riemannian background metric on a Lorentzian manifold), hence the exact values of $d$ are not important, it is mostly the topological information that is relevant. 
\medskip

Finally, we gather the necessary vocabulary and statements from metric geometry. For more details, we refer to \cite{BBI01, BH99, AKP19, Pap14}. 
Even though the rest of this section is well-known, for the convenience of the reader we briefly recall the arguments of the metric result concerning the equivalence of being intrinsic and the existence of midpoints. 
\begin{defi}[Midpoints and approximate midpoints]
 Let $(X,d)$ be a metric space and let $x,z \in X$.
 \item[(i)] $y \in X$ is called \emph{midpoint} of $x$ and $z$ if $d(x,y)=\frac{1}{2}d(x,z)=d(y,z)$.
 \item[(ii)] Let $\varepsilon>0$. $y \in X$ is called \emph{$\varepsilon$-midpoint} of $x$ and $z$ if $|d(x,y)-\frac{1}{2}d(x,z)|<\varepsilon$ and $|d(y,z)-\frac{1}{2}d(x,z)|<\varepsilon$. 
 \item[(iii)] $X$ is said to have \emph{midpoints} if for all $x,z \in X$ there exists a midpoint. $X$ is said to have \emph{approximate midpoints} if for all $x,z \in X$ and for all $\varepsilon>0$ there exists an $\varepsilon$-midpoint. 
\end{defi}

\begin{pop}[Intrinsic\footnote{We are never interested in whether the background metric $d$ on a \LpLS is intrinsic, so there is no danger of confusion. } metric has midpoints]
\label{pop: intrinsic has midpoints}
 Let $(X,d)$ be a metric space.
 \begin{itemize}
 \item[(i)] If $X$ is strictly intrinsic, then $X$ has midpoints.
 \item[(ii)] If $X$ is intrinsic, then $X$ has approximate midpoints.
 \end{itemize}
\end{pop}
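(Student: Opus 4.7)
The plan is to argue directly from the length structure, since both statements are classical. For (i), given $x, z \in X$ strictly intrinsically connected, pick a distance realizer $\gamma : [0,1] \to X$ from $x$ to $z$, parametrized proportionally to arc length so that $L(\gamma|_{[0,s]}) = s\, d(x,z)$. Setting $y := \gamma(1/2)$, the bounds $d(x,y) \le L(\gamma|_{[0,1/2]}) = \tfrac{1}{2}d(x,z)$ and $d(y,z) \le \tfrac{1}{2}d(x,z)$ combine with the triangle inequality $d(x,z) \le d(x,y)+d(y,z)$ to force both to equal $\tfrac{1}{2}d(x,z)$, yielding the desired midpoint.

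For (ii), given $x,z \in X$ and $\varepsilon > 0$, use intrinsicness to choose a curve $\gamma$ from $x$ to $z$ with $L(\gamma) < d(x,z) + \varepsilon$, and reparametrize proportionally to arc length. Let $y := \gamma(1/2)$. Then $d(x,y), d(y,z) \le \tfrac{1}{2}L(\gamma) < \tfrac{1}{2}d(x,z) + \tfrac{\varepsilon}{2}$, giving the upper half of the approximate-midpoint inequality. For the lower half, the triangle inequality gives $d(x,y) \ge d(x,z) - d(y,z) > \tfrac{1}{2}d(x,z) - \tfrac{\varepsilon}{2}$, and symmetrically for $d(y,z)$. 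Since $\varepsilon>0$ was arbitrary, $y$ is an $\varepsilon$-midpoint up to the harmless factor $1/2$ (which one absorbs by starting with $\varepsilon/2$).

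The only mild technical input is the ability to split $\gamma$ at a point where half its length has been used, which relies on the continuity of $s \mapsto L(\gamma|_{[0,s]})$ for continuous rectifiable curves in metric spaces, a standard fact from \cite{BBI01} that I would cite rather than reprove. Beyond this, both statements are immediate consequences of the triangle inequality and the definition of length, so there is no real obstacle; the argument serves mainly as a template for the Lorentzian analogue, where the interplay between $\tau$ and $d_T$ will replace these elementary length manipulations.
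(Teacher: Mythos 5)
Your argument is correct and is exactly the standard one: the paper itself gives no independent proof but simply cites \cite[Lemmas 2.4.8 and 2.4.10]{BBI01}, which establish the statement by the same device of splitting a (near-)shortest curve at half its arc length and invoking the triangle inequality. Nothing further is needed.
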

\begin{proof}
 See \cite[Lemmas 2.4.8 and 2.4.10]{BBI01}. 
\end{proof}

The proof of the other direction relies on the following fundamental result about extending uniformly continuous functions, which will also be used for the Lorentzian proof.
\begin{thm}[Extending uniformly continuous functions]
\label{thm: extension theorem}
Let $X$ and $Y$ be metric spaces and assume that $Y$ is complete. Let $A \subseteq X$ be dense and let $f:A \to Y$ be a uniformly continous function. Then there exists a unique continuous extension of $f$ to all of $X$.
\end{thm}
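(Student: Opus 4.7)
The plan is to construct the extension $\tilde f\colon X\to Y$ pointwise via limits of sequences in $A$, exploiting uniform continuity to transfer the Cauchy property across $f$ and completeness of $Y$ to secure a limit. Explicitly, for each $x\in X$ I would pick a sequence $(a_n)\subseteq A$ with $a_n\to x$, which exists since $A$ is dense. Such a sequence is Cauchy in $X$, and by uniform continuity of $f$, for every $\varepsilon>0$ there is $\delta>0$ with $d_X(a,a')<\delta\Rightarrow d_Y(f(a),f(a'))<\varepsilon$; hence $(f(a_n))$ is Cauchy in $Y$. Completeness of $Y$ then provides a limit, which I define to be $\tilde f(x)$.

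The first real step is to check that $\tilde f(x)$ does not depend on the chosen sequence. Given two sequences $a_n\to x$ and $b_n\to x$, the interlaced sequence $a_1,b_1,a_2,b_2,\ldots$ is again Cauchy and converges to $x$; by the previous argument its image is Cauchy in $Y$ and therefore has a unique limit, forcing $\lim f(a_n)=\lim f(b_n)$. For $x\in A$ one may take the constant sequence $a_n=x$, so $\tilde f|_A=f$. Uniqueness of the extension follows because any two continuous extensions must agree on a dense set and hence everywhere.

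The key technical step, and the one I expect to be the real work, is showing that $\tilde f$ is continuous; I would actually prove it is uniformly continuous. Fix $\varepsilon>0$ and use uniform continuity of $f$ to choose $\delta>0$ so that $d_X(a,a')<2\delta\Rightarrow d_Y(f(a),f(a'))<\varepsilon/3$. Given $x,y\in X$ with $d_X(x,y)<\delta$, pick $(a_n)\subseteq A$ with $a_n\to x$ and $(b_n)\subseteq A$ with $b_n\to y$; for $n$ large enough one has $d_X(a_n,x)<\delta/2$ and $d_X(b_n,y)<\delta/2$, so $d_X(a_n,b_n)<2\delta$ and therefore $d_Y(f(a_n),f(b_n))<\varepsilon/3$. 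Passing to the limit (using continuity of the metric) gives $d_Y(\tilde f(x),\tilde f(y))\leq\varepsilon/3<\varepsilon$. The main subtlety here is simply keeping the estimates clean while controlling two approximating sequences at once; once that is done, uniform continuity, and in particular continuity, of $\tilde f$ follows, completing the proof.
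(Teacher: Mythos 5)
Your proof is correct. Note that the paper does not actually prove this statement: it simply cites the general extension theorem for uniformly continuous maps between uniform spaces (Willard, Theorem 39.10), remarking that every metric space is a uniform space. Your argument is the standard direct metric-space proof of that result, and all the steps check out: the image of a convergent (hence Cauchy) sequence under a uniformly continuous map is Cauchy, completeness of $Y$ gives the limit, the interlacing trick gives well-definedness, the constant sequence shows $\tilde f|_A=f$, and the $\varepsilon/3$--$2\delta$ estimate with two approximating sequences gives uniform continuity of $\tilde f$, which is in fact slightly stronger than the continuity asserted in the statement. Uniqueness via agreement on a dense set in a metric (hence Hausdorff) codomain is also fine. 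What your approach buys is self-containedness and the explicit quantitative conclusion that the extension inherits uniform continuity (indeed with essentially the same modulus); what the paper's citation buys is generality (arbitrary uniform spaces) at no cost in length. Either route fully justifies the theorem as used later in the paper.
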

\begin{proof}
 See e.g.\ \cite[Theorem 39.10]{Wil04} (recall that any metric space is a uniform space).
\end{proof}

\begin{pop}[Existence of midpoints implies being intrinsic]
 Let $(X,d)$ be a complete metric space.
 \begin{itemize}
 \item[(i)] If $X$ has midpoints, then $X$ is strictly intrinsic.
 \item[(ii)] If $X$ has approximate midpoints, then $X$ is intrinsic.
 \end{itemize}
\end{pop}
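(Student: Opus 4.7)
The plan is to build a (near-)distance realizer by iterated midpoint bisection, defining the curve first on the dyadic rationals of $[0,1]$ and then extending continuously using completeness of $X$ together with Theorem~\ref{thm: extension theorem}. Fix $x,z \in X$ and let $D = \{k/2^n : n \geq 0,\ 0 \leq k \leq 2^n\}$ denote the dyadic rationals in $[0,1]$. I would set $\gamma(0) := x$, $\gamma(1) := z$, and recursively let $\gamma((2k+1)/2^{n+1})$ be a (possibly approximate) midpoint of $\gamma(k/2^n)$ and $\gamma((k+1)/2^n)$ whenever these are distinct, and their common value otherwise.

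For part (i), the exact midpoint condition gives, by induction on $n$, the identity $d(\gamma(k/2^n), \gamma((k+1)/2^n)) = 2^{-n} d(x,z)$ for every pair of consecutive depth-$n$ dyadics. Telescoping the triangle inequality across intermediate consecutive dyadics then yields $d(\gamma(s), \gamma(t)) \leq |s-t|\,d(x,z)$ for all $s,t \in D$, so $\gamma|_D$ is Lipschitz, in particular uniformly continuous. By completeness and Theorem~\ref{thm: extension theorem}, $\gamma$ extends to a continuous (indeed $d(x,z)$-Lipschitz) curve $\bar\gamma:[0,1] \to X$ from $x$ to $z$. The Lipschitz bound gives $L(\bar\gamma) \leq d(x,z)$, while the reverse inequality is automatic; thus $\bar\gamma$ realizes the distance and $X$ is strictly intrinsic.

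For part (ii), fix $\varepsilon > 0$ and choose positive $\varepsilon_n$ with $\sum_{n=1}^\infty 2^n \varepsilon_n \leq \varepsilon$ (e.g.\ $\varepsilon_n := \varepsilon/4^n$). At depth $n$ of the bisection I would take an $\varepsilon_n$-midpoint in place of an exact one. Writing $D_n := \sup_k d(\gamma(k/2^n), \gamma((k+1)/2^n))$, the approximate midpoint inequality yields the recursion $D_n \leq D_{n-1}/2 + \varepsilon_n$, and hence $D_n \leq 2^{-n} d(x,z) + \sum_{k=1}^n 2^{k-n}\varepsilon_k$. Telescoping across the depth-$N$ dyadic steps between any $s,t \in D$ then gives $d(\gamma(s),\gamma(t)) \leq |s-t|\bigl(d(x,z) + \sum_{k=1}^\infty 2^k \varepsilon_k\bigr) \leq |s-t|(d(x,z) + \varepsilon)$. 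So $\gamma|_D$ is again Lipschitz, extending to $\bar\gamma:[0,1] \to X$ with $L(\bar\gamma) \leq d(x,z) + \varepsilon$. Since $\varepsilon > 0$ was arbitrary and $d(x,z)$ is always a lower bound for $L$, this proves $d(x,z) = \inf_\gamma L(\gamma)$, so $X$ is intrinsic.

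The main obstacle is the error bookkeeping in part (ii): the $\varepsilon_n$ must be chosen small enough that both the accumulated length error stays under $\varepsilon$ \emph{and} the map $\gamma|_D$ remains uniformly continuous so that Theorem~\ref{thm: extension theorem} applies. The summability condition $\sum 2^n \varepsilon_n < \infty$ accomplishes both at once via the recursion on $D_n$. Once $\gamma|_D$ is Lipschitz, the completion argument and the length-versus-distance comparison are routine, and part (i) arises as the degenerate case $\varepsilon_n \equiv 0$.
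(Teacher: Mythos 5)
Your proof is correct and takes essentially the same route as the paper's (sketched) argument: dyadic bisection by midpoints, the Lipschitz estimate on consecutive dyadics, extension via Theorem~\ref{thm: extension theorem}, and the length-versus-distance comparison. Your bookkeeping for part (ii) with $\varepsilon_n = \varepsilon/4^n$ and the recursion $D_n \leq D_{n-1}/2 + \varepsilon_n$ is precisely the ``slight modification'' the paper alludes to (and mirrors the $\varepsilon/4^{n+1}$ choice used later in the Lorentzian Theorem~\ref{thm: approx midpoints imply intrinsic}).
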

\begin{proof}[Proof (Rough idea)]
 We give a very simple proof idea for (i), following \cite[Theoreom 2.4.16]{BBI01}. 
 Let $x,z \in X$ and iteratively define a ``curve'' as follows: set $\gamma(0)=x, \gamma(1)=z$ and $\gamma(\frac{1}{2})=y$, where $y$ is a midpoint of $x$ and $z$. 
 Then define $\gamma(\frac{1}{4})$ to be a midpoint between $x$ and $y$ and define $\gamma(\frac{3}{4})$ to be a midpoint between $y$ and $z$.
 We can repeat this process to finally end up with a map from the dyadic rationals in $[0,1]$ into $X$. By construction, for any two dyadic rationals $t, t'$, we have 
 \begin{equation}
 \label{eq: midpoint Lipschitz}
 d(\gamma(t),\gamma(t')) = |t-t'|d(x,z)\,,
 \end{equation}
 i.e., $\gamma$ is Lipschitz. In particular, $\gamma$ is then uniformly continuous and we can apply Theorem \ref{thm: extension theorem} as the dyadic rationals form a dense subset of $[0,1]$. Thus, we obtain a curve from $x$ to $z$, and $d(x,z)=L_d(\gamma)$ follows by \eqref{eq: midpoint Lipschitz} as well. 

 (ii) can be shown by slightly modifying the proof of (i).
\end{proof}

\section{Results}
We now turn to the Lorentzian equivalence between the existence of (approximate) $\tau$-midpoints and being (strictly) intrinsic: for the one direction, note that Proposition \ref{pop: intrinsic has midpoints} can be transferred with ease. Conerning the other direction, however, we quickly notice some obstacles. While constructing a dyadic curve $\gamma$ consisting of $\tau$-midpoints poses no issues, an inequality in the spirit of \eqref{eq: midpoint Lipschitz} for $\tau$, i.e., 
\begin{equation}
 \label{eq: tau midpoints property}
 \tau(\gamma(t), \gamma(t')) = |t-t'|\tau(x,z) \, ,
\end{equation}
does not yield anything in relation to the continuity of said map $\gamma$. Roughly speaking, this is because $d$ is directly involved with how close points are in a topological sense, while $\tau$ has no influence in this regard at all. 
Even if we assume $\tau$ to be continuous, we can only infer from points having small $d$-value, i.e., being topologically close, that their $\tau$-value is also small, i.e., that little time passes between the two events. 
Any implication in the other direction seems almost paradoxical: even in Minkowski space, points can have arbitrarily small $\tau$-distance from each other while having arbitrarily large $d$-distance, by choosing points which are almost null related but very far apart. 
In some sense, this wraps around to the original problem described at the start of the previous section: on a spacetime, the Lorentzian metric or the time separation function does not give any information about the topological relationship between two events. 
\medskip

In summary, we need some property which replaces this unwanted implication in the other direction (see Theorem \ref{thm: midpoints imply strictly intrinsic} below). 
It turns out that requiring some control about the time function values of midpoints in comparison to the time function values of the endpoints, i.e., $\tau$-midpoints are almost midpoints with respect to the time function, solves the issue. 
Before this, however, we show the easier direction concerning the existence of (approximate) midpoints in a (strictly) intrinsic \LpLSn. 
\begin{pop}[Being intrinsic implies existence of midpoints]
Let $X$ be a chronological \LpLS with $\tau$ continuous near the diagonal. 
\begin{itemize}
 \item[(i)] If $X$ is geodesic, then $X$ has $\tau$-midpoints. 
 \item[(ii)] If $X$ is intrinsic, then $X$ has approximate $\tau$-midpoints. 
\end{itemize}
\end{pop}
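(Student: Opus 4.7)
The plan is to produce a $\tau$-midpoint (respectively an $\varepsilon$-$\tau$-midpoint) by locating a ``halfway'' point along a distance realizer (respectively along a nearly length-maximizing causal curve) from $p$ to $q$. The locating step is carried out by applying the intermediate value theorem to the parametrized length function, and the midpoint properties are then read off from the reverse triangle inequality together with the definition of $L$ as an infimum over partitions.

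For (i), let $p \ll q$ and, by geodicity, fix a future-directed causal distance realizer $\gamma : [0,1] \to X$ with $L(\gamma) = \tau(p,q) =: L > 0$. The first step is to show that $\ell : s \mapsto L(\gamma|_{[0,s]})$ is continuous. Monotonicity and additivity are automatic, while for $s \le t$ the trivial partition yields $L(\gamma|_{[s,t]}) \le \tau(\gamma(s), \gamma(t))$, and this bound tends to zero as $t \to s$ by continuity of $\gamma$, continuity of $\tau$ near the diagonal, and the chronology assumption $\tau(x,x) = 0$. By the intermediate value theorem there exists $t^* \in [0,1]$ with $\ell(t^*) = L/2$. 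Setting $m := \gamma(t^*)$, the lower bounds $\tau(p,m) \ge L(\gamma|_{[0,t^*]}) = L/2$ and $\tau(m,q) \ge L(\gamma|_{[t^*,1]}) = L/2$ together with the reverse triangle inequality $\tau(p,m) + \tau(m,q) \le \tau(p,q) = L$ force equality on both sides, so $m$ is a $\tau$-midpoint.

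For (ii), fix $\varepsilon > 0$. Intrinsicness provides a future-directed causal curve $\gamma : [0,1] \to X$ from $p$ to $q$ with $L(\gamma) > \tau(p,q) - \varepsilon$. The continuity argument for $\ell$ from (i) applies verbatim, so IVT again yields $t^* \in [0,1]$ with $L(\gamma|_{[0,t^*]}) = L(\gamma)/2$. For $m := \gamma(t^*)$ I then combine $\tau(p,m) \ge L(\gamma)/2 > \tau(p,q)/2 - \varepsilon/2$ with the upper estimate $\tau(p,m) \le \tau(p,q) - \tau(m,q) \le \tau(p,q) - L(\gamma)/2 < \tau(p,q)/2 + \varepsilon/2$ coming from the reverse triangle inequality, obtaining $|\tau(p,m) - \tfrac{1}{2}\tau(p,q)| < \varepsilon/2 < \varepsilon$. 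The symmetric computation controls $\tau(m,q)$, showing that $m$ is an $\varepsilon$-$\tau$-midpoint.

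The only delicate point is the continuity of $\ell$, which is precisely where the hypothesis ``$\tau$ continuous near the diagonal'' is used; without it the bound $L(\gamma|_{[s,t]}) \le \tau(\gamma(s),\gamma(t))$ cannot be pushed to zero and the intermediate value argument collapses. Once that continuity is in place both claims reduce to routine bookkeeping with reverse triangle inequalities, so I do not expect further obstacles.
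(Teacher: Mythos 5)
Your proposal is correct and follows essentially the same route as the paper: apply the intermediate value theorem to the continuous arclength function $s\mapsto L_\tau(\gamma|_{[0,s]})$ along a realizer (resp.\ an almost-maximizing causal curve) and finish with the reverse triangle inequality. The only cosmetic differences are that you prove the continuity of the arclength function inline (the paper cites \cite[Lemma 3.33]{KS18}) and that in (i) you squeeze via $\tau(p,m)+\tau(m,q)\leq\tau(p,q)$ rather than invoking that restrictions of distance realizers are again realizers; both variants are fine.
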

\begin{proof}
 First, note that by \cite[Lemma 3.33]{KS18}, the map $\phi: t \mapsto L_{\tau}(\gamma|_{[a,t]})$ is continuous in $t$ for any causal curve $\gamma:[a,b] \to X$.

 (i) Let $x \ll z$ and let $\gamma:[a,b] \to X$ be a distance realizer from $x$ to $z$. By continuity of $\phi$, we find $s \in [a,b]$ such that $L_{\tau}(\gamma|_{[a,s]})=\frac{1}{2}L_{\tau}(\gamma)=\frac{1}{2}\tau(x,z)$. The claim then follows since the restriction of a distance realizer is still a distance realizer, i.e., setting $y=\gamma(s)$, we have $\tau(x,y)=L_{\tau}(\gamma|_{a,s]})=\frac{1}{2}\tau(x,z)$ and similarly for $\tau(y,z)$. 

 (ii) Let $x \ll z$. Given $\varepsilon >0$, let $\gamma : [a,b] \to X$ be a causal curve from $x$ to $z$ such that $L_{\tau}(\gamma) > \tau(x,z) - \varepsilon$. By continuity of $\phi$, we find $s \in [a,b]$ such that $L_{\tau}(\gamma|_{[a,s]})=\frac{1}{2}L_{\tau}(\gamma) > \frac{1}{2}\tau(x,z) - \frac{\varepsilon}{2}$. Let $y=\gamma(s)$, then $\tau(x,y) \geq L_{\tau}(\gamma|_{[a,s]}) > \frac{1}{2}\tau(x,z) - \frac{\varepsilon}{2}$. Similarly, we obtain $\tau(y,z) > \frac{1}{2}\tau(x,z) - \frac{\varepsilon}{2}$. 
 Conversely, by the reverse triangle inequality we have $\tau(x,y) \leq \tau(x,z) - \tau(y,z) < \tau(x,z) - \frac{1}{2}\tau(x,z) + \frac{\varepsilon}{2} = \frac{1}{2}\tau(x,z) + \frac{\varepsilon}{2}$. Analogously, $\tau(y,z) < \frac{1}{2}\tau(x,z) + \frac{\varepsilon}{2}$ and the proof is finished. 
\end{proof}

\begin{thm}[$\tau$-midpoints imply being causally geodesic]
\label{thm: midpoints imply strictly intrinsic}
Let $X$ be a locally causally closed and sufficiently causally connected \LpLSn. Let $T: X \to \mb{R}$ be a locally anti-Lipschitz time function and suppose that $X$ is equipped with the null-distance $d_T$ with respect to $T$. Assume that $(X,d_T)$ is complete and that $X$ has $\tau$-midpoints. Assume further that there exists $c \in (0,\frac{1}{2}]$ such that the time function is compatible with midpoints in the following sense: for any $p, q \in X, p \ll q$ there is a $\tau$-midpoint $m$ of $p$ and $q$ such that 
\begin{equation}
\label{eq: time function midpoint compatible}
(1-c) T(p) + c T(q) \leq T(m) \leq c T(p) + (1-c) T(q) \, . 
\end{equation}
Then $X$ is causally geodesic.
\end{thm}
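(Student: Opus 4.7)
The plan is to mimic the dyadic midpoint construction from the metric proof of Proposition~\ref{pop: intrinsic has midpoints} (reverse direction) but replace the background metric $d$ by the null distance $d_T$, so that the time function compatibility \eqref{eq: time function midpoint compatible} can furnish the Lipschitz-type control which is otherwise lacking. Fix $p \ll q$, let $D \subseteq [0,1]$ denote the dyadic rationals, and build $\gamma: D \to X$ by setting $\gamma(0) := p$, $\gamma(1) := q$, and, once $\gamma$ is defined on multiples of $2^{-n}$, letting $\gamma((2k+1)/2^{n+1})$ be a $\tau$-midpoint of $\gamma(k/2^n)$ and $\gamma((k+1)/2^n)$ satisfying the compatibility \eqref{eq: time function midpoint compatible}. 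Each pair used in this recursion is chronologically related (since a $\tau$-midpoint $m$ of $x \ll z$ satisfies $x \ll m \ll z$), so the construction never fails, and a routine induction on $n$ yields $\tau(\gamma(s), \gamma(t)) = (t-s)\tau(p,q)$ and $\gamma(s) \ll \gamma(t)$ for all dyadic $s < t$.

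The crucial step is to establish uniform continuity of $\gamma: D \to (X,d_T)$. Because dyadic values are causally ordered, \eqref{eq: null distance between causal points} gives $d_T(\gamma(s), \gamma(t)) = T(\gamma(t)) - T(\gamma(s))$ for dyadic $s < t$, so writing $f(t) := T(\gamma(t))$, it suffices to show $f$ is uniformly continuous on $D$. The compatibility \eqref{eq: time function midpoint compatible} forces $f((2k+1)/2^{n+1}) - f(k/2^n) \leq (1-c)(f((k+1)/2^n) - f(k/2^n))$, and similarly on the right half. Consequently the maximum $T$-gap at level $n$ satisfies $\omega(n+1) \leq (1-c)\omega(n)$, so $\omega(n) \leq (1-c)^n(T(q) - T(p))$, which tends to zero because $c > 0$. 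Combined with monotonicity of $f$ along $D$, this yields the desired uniform continuity. Completeness of $(X,d_T)$ and Theorem \ref{thm: extension theorem} then provide a continuous extension $\gamma: [0,1] \to X$, continuous also in the original topology by the homeomorphism between $d$ and $d_T$.

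It remains to verify that the extended $\gamma$ is a causal distance realizer between $p$ and $q$. For causality, given $s < t$, cover the compact image $\gamma([s,t])$ by finitely many open sets arising from local causal closure, apply the Lebesgue number lemma to obtain a subdivision $s = u_0 < \cdots < u_N = t$ whose consecutive image pairs each lie in a common such neighborhood, and approximate each pair by dyadic sequences $u_i^{(n)} \to u_i$ with $u_i^{(n)} < u_{i+1}^{(n)}$; local causal closure then yields $\gamma(u_i) \leq \gamma(u_{i+1})$, and transitivity gives $\gamma(s) \leq \gamma(t)$. For the realizer property, lower semi-continuity of $\tau$ together with dyadic approximation from the outside gives $\tau(\gamma(s), \gamma(t)) \leq (t-s)\tau(p,q)$, while the reverse triangle inequality applied to inner dyadic approximations $s \leq s' \leq t' \leq t$ with $s' \to s$, $t' \to t$ gives $\tau(\gamma(s), \gamma(t)) \geq \tau(\gamma(s'), \gamma(t')) = (t'-s')\tau(p,q) \to (t-s)\tau(p,q)$. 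Equality for all $s \leq t$ then implies $L_\tau(\gamma) = \tau(p,q)$. Geodesicity for timelike pairs, combined with causal path-connectedness, delivers ``causally geodesic'' via the remark preceding the statement.

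The principal obstacle is exactly the one flagged in the introduction to Section~3: the identity $\tau(\gamma(s),\gamma(t)) = (t-s)\tau(p,q)$ on dyadics says nothing about topological continuity, because small $\tau$-distance does not imply small $d$-distance. Switching to the null distance $d_T$ and exploiting the compatibility \eqref{eq: time function midpoint compatible}---which quantitatively contracts $T$-gaps by the factor $1-c < 1$ at each halving---is precisely what resurrects a continuity estimate in the spirit of \eqref{eq: midpoint Lipschitz}.
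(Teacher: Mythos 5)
Your proposal is correct and follows the same overall architecture as the paper (dyadic $\tau$-midpoint curve, uniform continuity via the null distance and \eqref{eq: time function midpoint compatible}, extension by Theorem \ref{thm: extension theorem}, causality via local causal closure, realizer property via the reverse triangle inequality and dyadic sandwiching, null pairs handled by causal path-connectedness). The one genuinely different step is the continuity argument. The paper proves that $\gamma$ is $\alpha$-H\"older with $\alpha=-\log_2(1-c)$ by decomposing an arbitrary dyadic increment according to its binary expansion and summing a geometric series. You instead observe that $d_T(\gamma(s),\gamma(t))=f(t)-f(s)$ with $f=T\circ\gamma$ monotone on the dyadics, so that any increment with $t-s<2^{-n}$ is dominated by two consecutive level-$n$ gaps, each bounded by $(1-c)^n(T(q)-T(p))$; since this tends to $0$, uniform continuity follows. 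Your route is shorter and exploits monotonicity of $T$ along the causally ordered dyadic points, at the cost of yielding only uniform continuity rather than the explicit H\"older modulus --- but the extension theorem needs no more than that, so nothing is lost. Two further minor variations are harmless: your causality argument covers $\gamma([s,t])$ with causally closed neighbourhoods and invokes the Lebesgue number lemma, where the paper only needs neighbourhoods of the two endpoints together with the chronological relation between intermediate dyadic points; and for the upper bound $\tau(\gamma(s),\gamma(t))\leq(t-s)\tau(p,q)$ you invoke lower semicontinuity of $\tau$ (valid, since it is part of the definition of a Lorentzian pre-length space), where the paper uses the reverse triangle inequality with outer dyadic approximation.
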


We try to give some meaning to \eqref{eq: time function midpoint compatible}: this is the compatibility between $\tau$-midpoints and the time function which we mentioned above. 
The value $c$ controls how far off a $\tau$-midpoint is allowed to be from a midpoint with respect to the time function.

In the worst case of $c=0$, which is ruled out by assumption, the defining equation is trivial, as $T(p) \leq T(m) \leq T(q)$ has to be satisfied for causally related triples for any time function by definition. 
The other extreme, $c=\frac{1}{2}$, demands that some $\tau$-midpoint is precisely a midpoint with respect to $T$, i.e., \eqref{eq: time function midpoint compatible} forces $T(m) = \frac{1}{2}( T(p) + T(q))$. 
In summary, we require there to be \emph{some} control for a $\tau$-midpoint by the time function. 
We do not necessarily care how good the control is, just for there to be some (uniform) control. 
In Remark \ref{rem: visual explanation}, we will demonstrate how \eqref{eq: time function midpoint compatible} (and the corresponding \eqref{eq: time function midpoint compatible2} for $\varepsilon$-$\tau$-midpoints) can be visualized in Minkowski space. 

\begin{proof}
If $p\leq q$ are null related, causal path connectedness implies the existence of a null curve between them, which is automatically a null realizer.

So let $p\ll q$. As in the metric proof, we inductively define a function $\gamma$ on the dyadic rationals in $[0,1]$ as follows, using only $\tau$-midpoints which satisfy \eqref{eq: time function midpoint compatible}: let $\gamma(0)=p, \gamma(1)=q$ and $\gamma(\frac{1}{2})=m$ for a $\tau$-midpoint $m$ of $p$ and $q$. 
Further, define $\gamma(\frac{1}{4})$ to be a $\tau$-midpoint of $p$ and $m$ and define $\gamma(\frac{3}{4})$ to be a $\tau$-midpoint of $m$ and $q$. Continuing iteratively, we obtain that $\gamma(\frac{2k+1}{2^{n+1}})$ is a $\tau$-midpoint of $\gamma(\frac{k}{2^n})$ and $\gamma(\frac{k+1}{2^n})$. 
In this way, we end up with a dyadic ``curve'' $\gamma:[0,1]\cap\left\{\frac{k}{2^n}\colon k,n\in\mb{N}\right\}\to X$. Note that by \eqref{eq: tau midpoints property}, for any two dyadic rationals $t,t'$ we have 
\begin{equation}
\label{eq: dyadic curve timelike strictly intrinsic}
 t < t' \iff \gamma(t) \ll \gamma(t') \, .
\end{equation}
We claim that $\gamma$ is $\alpha$-Hölder, where $\alpha \coloneqq -\log_2(1-c)$. 
This then implies that $\gamma$ is uniformly continuous and we can apply Theorem \ref{thm: extension theorem}.
Towards this claim, first note that \eqref{eq: time function midpoint compatible} implies, after reformulating, that for a midpoint $y$ of $x\ll z$, it holds that
\begin{align*}
c(T(z)-T(x))&\leq T(y)-T(x)\leq (1-c)(T(z)-T(x)) \, , \\
c(T(z)-T(x))&\leq T(z)-T(y)\leq (1-c)(T(z)-T(x)) \, .
\end{align*}
Applying the inequalities on the right iteratively to points on $\gamma$, we see that 
\begin{align}
\label{taumidptsintr:eqSubseqDyad}
& d_{T}\left(\gamma(\frac{k}{2^n}),\gamma(\frac{k+1}{2^n})\right) = T\left(\gamma\left(\frac{k+1}{2^n}\right)\right)-T\left(\gamma\left(\frac{k}{2^n}\right)\right) \\
&\leq (1-c)\left(T\left(\gamma\left(\frac{\lfloor k/2 \rfloor +1}{2^{n-1}}\right)\right)-T\left(\gamma\left(\frac{\lfloor k/2 \rfloor}{2^{n-1}}\right)\right)\right)\nonumber \leq \ldots \\
&\leq (1-c)^{n-1}\left(T\left(\gamma\left(\frac{\lfloor k/2^{n-1} \rfloor +1}{2}\right)\right)-T\left(\gamma\left(\frac{\lfloor k/2^{n-1} \rfloor }{2}\right)\right)\right)\nonumber \\
&\leq (1-c)^n(T(q)-T(p)) \, ,\nonumber 
\end{align}
where the equality in the first line holds by \eqref{eq: null distance between causal points}. 
Intuitively, this computation can be described in words as follows: the interval $[\frac{k}{2^n},\frac{k+1}{2^n}]$ of subsequent midpoints can be reached by halving the original interval and choosing the correct side each time. 
We essentially reverse this process and apply the appropriate reformulation of \eqref{eq: time function midpoint compatible} in each step.

Now consider any two dyadic rationals and express them with the same denominator, i.e., consider $\frac{k_1}{2^n}<\frac{k_2}{2^n}$. Denote by $\frac{l}{2^N}$ the unique dyadic rational with the lowest denominator in $[\frac{k_1}{2^n},\frac{k_2}{2^n}]$. Then $\frac{k_2}{2^n}-\frac{l}{2^N} \in [0,\frac{1}{2^N})$ is a dyadic rational with denominator at most $2^n$. We bring it to the form 
\begin{equation}
\frac{k_2}{2^n}-\frac{l}{2^N} = \sum_{i=N+1}^{n} c_i\frac{1}{2^i} \, ,
\end{equation}
with $c_i \in \{0,1\}$. That is, $c_{N+1}, c_{N+2}, \cdots, c_n$ are the coefficients (of descending rank) in the binary representation of $k_2-l2^{n-N}$. We set $d_j=\frac{l}{2^N} + \sum_{i=N+1}^{j} c_i\frac{1}{2^i}$, i.e., the numbers obtained by cutting off the binary representation of $\frac{k_2}{2^n}$ after $j$ decimal places. Then $d_N=\frac{l}{2^N}$ and $d_n=\frac{k_2}{2^n}$. 
In particular, $d_{j+1}-d_{j}$ is either $0$ or $\frac{1}{2^{j+1}}$, and both have denominator at most $\frac{1}{2^{j+1}}$. In other words, they are equal or subsequent dyadic rationals, and we can apply \eqref{taumidptsintr:eqSubseqDyad}.
We set $b:= \min\{i:c_i\neq0\}\geq N+1$, so $\frac{1}{2^b}\leq\frac{k_2}{2^n}-\frac{l}{2^N}\leq \frac{1}{2^{b-1}}$. Then we obtain 
\begin{align*}
d(\gamma(\frac{l}{2^N}),\gamma(\frac{k_2}{2^n}))&\leq \sum_{j=b}^{n-1} d(\gamma(d_j),\gamma(d_{j+1}) \leq \sum_{j=b}^{n-1} (1-c)^{j+1}(T(q)-T(p)) \\
&= (1-c)^{b}\frac{1-(1-c)^{n-b}}{1-(1-c)} (T(q)-T(p))
\\
&\leq (1-c)^{b}\frac{1}{c} (T(q)-T(p)) \, ,
\end{align*}
using the geometric sum formula. 
In summary, we used the triangle inequality to sub-divide the distance between arbitrary dyadic rationals into smaller distances of dyadic rationals, which are subsequent on coarseness levels getting finer according to the binary representation of the enumerators, and apply the process of \eqref{taumidptsintr:eqSubseqDyad} to each of these distances. 
As $\frac{1}{2^b}\leq\frac{k_2}{2^n}-\frac{l}{2^N}$, we have $(1-c)^b = \left(\frac{1}{2^b}\right)^{-\log_2(1-c)}\leq \left(\frac{k_2}{2^n}-\frac{l}{2^N}\right)^{-\log_2(1-c)}$. Recalling that $\alpha=-\log_2(1-c)$, the previous calculation simplifies to 
\begin{equation}
d(\gamma(\frac{l}{2^N}),\gamma(\frac{k_2}{2^n})) \leq \frac{T(q)-T(p)}{c} \left(\frac{k_2}{2^n}-\frac{l}{2^N}\right)^\alpha \, .
\end{equation}
Analogously, we obtain 
\begin{equation}
d(\gamma(\frac{k_1}{2^n}),\gamma(\frac{l}{2^N})) \leq \frac{T(q)-T(p)}{c} \left(\frac{l}{2^N}-\frac{k_1}{2^n}\right)^\alpha \, .
\end{equation}
Putting these together, we have 
\begin{align*}
d(\gamma(\frac{k_1}{2^n}),\gamma(\frac{k_2}{2^n})) & \leq 
d(\gamma(\frac{k_1}{2^n}),\gamma(\frac{l}{2^N})) + d(\gamma(\frac{l}{2^N}),\gamma(\frac{k_2}{2^n})) \\ 
& \leq \frac{T(q)-T(p)}{c}\left(\left(\frac{k_2}{2^n}-\frac{l}{2^N}\right)^{\alpha}+ \left(\frac{l}{2^N}-\frac{k_1}{2^n}\right)^{\alpha}\right) \\ 
& \leq 
2\frac{T(q)-T(p)}{c}\left(\left(\frac{k_2}{2^n}-\frac{k_1}{2^n}\right)\right)^\alpha \, ,
\end{align*}
where we used the elementary identity $a^{\alpha} + b^{\alpha} \leq 2(a+b)^{\alpha}$ for $a,b\geq0, \alpha >0$. 
This shows that $\gamma$ is $\alpha$-Hölder. 
Thus, we can apply Theorem \ref{thm: extension theorem} to obtain a continuous extension of $\gamma$ to $[0,1]$, which we shall denote again by $\gamma$. 
\medskip

It is left to show that $\gamma : [0,1] \to X$ is causal and distance realizing. 
Concerning causality, for any two real numbers $t_1<t_2$ in $[0,1]$, we need to show that $\gamma(t_1)\leq\gamma(t_2)$. First, let $t_1=\frac{k_1}{2^n},t_2=\frac{k_2}{2^n}$ be dyadic rationals, then this follows immediately by \eqref{eq: dyadic curve timelike strictly intrinsic}. 
Now let $t_1<t_2$ be arbitrary. Let $U_i$ be an open causally closed neighbourhood of $\gamma(t_i),\, i=1,2$, and find dyadic rationals $t_1<\frac{k_1}{2^n}<\frac{k_2}{2^n}< t_2$ such that $\gamma(\frac{k_i}{2^n})\in U_i$. 
For a sequence $s_n\to t_1$ of dyadic rationals, we have $s_n \leq \frac{k_1}{2^n}$ for almost all $n$, so $\gamma(s_n)\leq\gamma(\frac{k_1}{2^n})$. By the causal closedness of $U_i$, we infer $\gamma(t_1)\leq\gamma(\frac{k_1}{2^n})$, and similarly $\gamma(\frac{k_2}{2^n})\leq\gamma(t_2)$. 
In total, $\gamma(t_1)\leq\gamma(\frac{k_1}{2^n}) \ll \gamma(\frac{k_2}{2^n})\leq\gamma(t_2)$. That is, $\gamma$ is even a timelike curve.

Finally, we show that $\gamma$ is a distance realizer with constant speed $L:=\tau(p,q)$: For $t_1<t_2$, we need to show that $\tau(\gamma(t_1),\gamma(t_2))=L(t_2-t_1)$. First, let $t_1=\frac{k_1}{2^n},t_2=\frac{k_2}{2^n}$ be dyadic rationals. Then this follows by induction on $n$ (noting that these are constructed as midpoints). 
For general $t_1,t_2 \in [0,1]$, this now follows by reverse triangle inequality of $\tau$. 
Indeed, let $(\frac{k_1^{(n)}}{2^n})_n, (\frac{k_1^{(n)}+1}{2^n})_n$ and $(\frac{k_2^{(n)}}{2^n})_n, (\frac{k_2^{(n)}+1}{2^n})_n$ be sequences of dyadic rationals approximating $t_1$ and $t_2$ from above and below, respectively, where the $(n)$ symbolizes that the enumerators are dependent on $n$. Then we have 
\begin{equation}
\label{eq: str intrinsic distance realizer}
\underbrace{\tau(\gamma(\frac{k_1^{(n)}+1}{2^n}),\gamma(\frac{k_2^{(n)}}{2^n}))}_{=\frac{k_2^{(n)}-k_1^{(n)}}{2^n}L -\frac{1}{2^n}L}\leq\tau(\gamma(t_1),\gamma(t_2))\leq\underbrace{\tau(\gamma(\frac{k_1^{(n)}}{2^n}),\gamma(\frac{k_2^{(n)}+1}{2^n}))}_{=\frac{k_2^{(n)}-k_1^{(n)}}{2^n}L +\frac{1}{2^n}L} \, .
\end{equation}
For $n\to+\infty$, it follows that $\frac{k_1^{(n)}}{2^n}\to t_1$ and $\frac{k_2^{(n)}}{2^n}\to t_2$, so the distance realizer property also holds for these parameters.
\end{proof}
This result can also be generalized for $\varepsilon$-$\tau$-midpoints, giving that $X$ is intrinsic:
\begin{thm}[Approximate $\tau$-midpoints imply being intrinsic]
\label{thm: approx midpoints imply intrinsic}
Let $X$ be a locally causally closed and sufficiently causally connected \LpLSn. Let $T: X \to \mb{R}$ be a time function and suppose that $X$ is equipped with the null-distance $d_T$ with respect to $T$. Assume that $(X,d_T)$ is complete and that $X$ has approximate $\tau$-midpoints. Assume further that there exists $c \in (0,\frac{1}{2}]$ and $\hat{\varepsilon}>0$ such that the time function is compatible with approximate $\tau$-midpoints in the following sense: for any $p, q \in X, p \ll q$ there is a $\tau(p,q)\hat{\varepsilon}$-$\tau$-midpoint $m$ of $p$ and $q$ such that 
\begin{equation}
\label{eq: time function midpoint compatible2}
(1-c) T(p) + c T(q) \leq T(m) \leq c T(p) + (1-c) T(q) \, .
\end{equation}
Then $X$ is intrinsic.
\end{thm}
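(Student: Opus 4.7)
The plan is to adapt the argument of Theorem~\ref{thm: midpoints imply strictly intrinsic} to the approximate-midpoint setting. For $p \ll q$ with $\tau(p,q)=L$, I would build a dyadic function $\gamma$ on the dyadic rationals in $[0,1]$ by iteratively selecting, at each splitting step (for a current segment of $\tau$-distance $d$), the $T$-compatible $d\hat{\varepsilon}$-$\tau$-midpoint provided by the hypothesis. Since \eqref{eq: time function midpoint compatible2} has exactly the same form as \eqref{eq: time function midpoint compatible}, the telescoping estimate on time-function values from Theorem~\ref{thm: midpoints imply strictly intrinsic} applies verbatim: $d_T(\gamma(k/2^n),\gamma((k+1)/2^n)) \leq (1-c)^n(T(q)-T(p))$, so $\gamma$ is $\alpha$-Hölder on the dyadic rationals with $\alpha = -\log_2(1-c)$. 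Theorem~\ref{thm: extension theorem} together with completeness of $(X,d_T)$ then extends $\gamma$ to a continuous map $\gamma: [0,1] \to X$, and local causal closedness (exactly as in the second-to-last step of Theorem~\ref{thm: midpoints imply strictly intrinsic}) ensures it is a future-directed timelike curve from $p$ to $q$.

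The point of departure from Theorem~\ref{thm: midpoints imply strictly intrinsic} is that the $\tau$-values between consecutive dyadic points no longer equal $L/2^n$ exactly. From the $\varepsilon$-$\tau$-midpoint property \eqref{eq: epsilon-midpoints definition}, each splitting step satisfies $\tau(x,m) + \tau(m,z) \geq \tau(x,z)(1-2\hat{\varepsilon})$, so the dyadic partition sums $S_n := \sum_{k} \tau(\gamma(k/2^n), \gamma((k+1)/2^n))$ only obey $S_{n+1} \geq (1-2\hat{\varepsilon})S_n$, which yields the geometric lower bound $S_n \geq (1-2\hat{\varepsilon})^n L$. Since $L_\tau(\gamma)$ is given by the limit of these partition sums, this worst-case bound on its own is not enough to conclude intrinsicness.

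The main obstacle is therefore to produce, for each prescribed $\delta>0$, a causal curve $\gamma_\delta$ from $p$ to $q$ with $L_\tau(\gamma_\delta) > \tau(p,q) - \delta$. A natural remedy is to combine both halves of the hypothesis: the $T$-compatible midpoints supply the Hölder control needed for the continuous extension and for anchoring the construction at the prescribed endpoints, while the unconditional existence of $\varepsilon$-$\tau$-midpoints for every $\varepsilon>0$ allows one to refine using sharper midpoints at deeper levels. Concretely, for each $\delta$ one may keep the $T$-compatible midpoints on an initial coarse scaffold and then use non-$T$-compatible $\varepsilon_n$-$\tau$-midpoints further down with a summable tolerance sequence such as $\varepsilon_n \sim \delta/4^n$, so that the cumulative loss $L - S_n$ telescopes to at most $\delta$. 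The delicate technical points, which I expect to be the heart of the argument, are to verify that such a hybrid construction still extends to a continuous causal curve from $p$ to $q$ despite the loss of Hölder control past a certain level, and to estimate the Lorentzian length of the extension so that indeed $L_\tau(\gamma_\delta) > \tau(p,q) - \delta$.
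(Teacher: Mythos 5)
You correctly locate the central tension: if at every level you only use the $\tau$-midpoints at the coarse tolerance $\tau(x,z)\hat{\varepsilon}$ that the hypothesis literally hands you, the partition sums degrade geometrically, $S_n \geq (1-2\hat{\varepsilon})^n L \to 0$, and you learn nothing about $L_\tau(\gamma)$. But your proposed remedy --- a hybrid scaffold with $T$-compatible midpoints down to some finite level and non-compatible $\varepsilon_n$-$\tau$-midpoints below --- does not close the gap, and the issue you defer as a ``delicate technical point'' is in fact fatal rather than technical. The compatibility \eqref{eq: time function midpoint compatible2} is the \emph{only} source of control on $d_T(\gamma(k/2^n),\gamma((k+1)/2^n))$; once you abandon it, you have no bound whatsoever on the null distance between consecutive dyadic points, hence no uniform continuity, no extension via Theorem \ref{thm: extension theorem}, and therefore no curve at all --- the dyadic scaffold by itself carries no Lorentzian length, and filling in the gaps between scaffold points by near-maximizing causal curves is circular, since that is precisely the intrinsicness you are trying to prove.

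The paper's proof dissolves the dichotomy by demanding both properties of the \emph{same} midpoints: fix $\varepsilon\in(0,\hat{\varepsilon}\tau(p,q))$ and let $\gamma(\tfrac{2k+1}{2^{n+1}})$ be an $\tfrac{\varepsilon}{4^{n+1}}$-$\tau$-midpoint of its level-$n$ neighbours which \emph{also} satisfies \eqref{eq: time function midpoint compatible2}. The geometrically decaying tolerances make the loss telescope, $\tau(\gamma(\tfrac{k}{2^n}),\gamma(\tfrac{k+1}{2^n})) > \tfrac{1}{2^n}(\tau(p,q)-\varepsilon)$ --- essentially your $\varepsilon_n\sim\delta/4^n$ idea, but applied from the first level on --- while the compatibility at every level reproduces the Hölder bound of Theorem \ref{thm: midpoints imply strictly intrinsic} verbatim, so the continuous extension, causality, and the lower speed bound $\tau(\gamma(t_1),\gamma(t_2))\geq(\tau(p,q)-\varepsilon)(t_2-t_1)$ all go through, and letting $\varepsilon\to 0$ gives intrinsicness. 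Note that this requires reading the hypothesis as supplying a $T$-compatible midpoint at the finer tolerances used in the construction (each such midpoint is in particular a $\tau(p,q)\hat{\varepsilon}$-$\tau$-midpoint); you adopted the weaker literal reading in which compatibility is available only at the single tolerance $\tau(p,q)\hat{\varepsilon}$, and under that reading your construction, as described, does not yield the theorem.
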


\begin{rem}[A visual presentation for \eqref{eq: time function midpoint compatible2}]\label{rem: visual explanation}
Now we try to give some visual understanding of this relationship. To this end, first note that the set of $\varepsilon$-$\tau$-midpoints in a \LpLS $X$ can be described as follows: let $\varepsilon>0$ and let $p \ll q$. 
Let $H_p^{\varepsilon, +}=\{y \in X \mid \tau(p,y) > \frac{1}{2}\tau(p,q) - \varepsilon\}$ and $H_q^{\varepsilon, -}=\{y \in X \mid \tau(y,q) > \frac{1}{2}\tau(p,q) - \varepsilon\}$, then $L=H_p^{\varepsilon, +} \cap H_q^{\varepsilon, -}$ is the set of $\varepsilon$-$\tau$-midpoints of $p$ and $q$. 
Also define $S=\{y\in X \mid (1-c)T(p)+cT(q)\leq T(y)\leq cT(p)+(1-c)T(q)\}$. 
Then \eqref{eq: time function midpoint compatible2} requires that $L\cap S$ is non-empty. 

In (1+1)-dimensional Minkowski space $\mb{R}^{1,1}$, the sets $H_p^{\varepsilon}$ and $H_q^{\varepsilon}$ are confined by a hyperbola centered at $p$ and $q$, respectively, and $L$ is an open set which might visually be desribed as a ``lens''. 
With the canonical time function denoted by $T$, the set $S$ is a horizontal strip with width $(1-2c)\tau(p,q)$ symmetric around the horizontal line $\frac{1}{2}(T(q)-T(p))$. 
In Minkowski space, elementary calculations show that by choosing $\varepsilon$ small enough one can even ensure that $L\subseteq S$, see Figure \ref{fig: lens and band}. 
In particular, \eqref{eq: time function midpoint compatible2} holds. 
\begin{figure}
\begin{center}
\begin{tikzpicture}

\draw [samples=50,domain=-0.46:0.46,rotate around={90:(4,2.5)},xshift=4cm,yshift=2.5cm] plot ({1.3*(1+(\x)^2)/(1-(\x)^2)},{1.3*2*(\x)/(1-(\x)^2)});
\draw [samples=50,domain=-0.46:0.46,rotate around={90:(4,6.5)},xshift=4cm,yshift=6.5cm] plot ({1.3*(-1-(\x)^2)/(1-(\x)^2)},{1.3*(-2)*(\x)/(1-(\x)^2)});
\draw [domain=-1.1558404941238893:7.349471484124196] plot(\x,{(--3.7-0*\x)/1});
\draw [domain=-1.1558404941238893:7.349471484124196] plot(\x,{(--5.3-0*\x)/1});
\draw [dashed] (0,7.5)-- (0,2);
\begin{scriptsize}
\coordinate [circle, fill=black, inner sep=0.5pt, label=270: {$p$}] (p) at (4,2.5);
\coordinate [circle, fill=black, inner sep=0.5pt, label=90: {$q$}] (q) at (4,6.5);
\coordinate [circle, fill=black, inner sep=0.5pt, label=180: {$m$}] (m) at (4,4.5);

\draw (4.5,4.5) node {$L$};
\draw (0.2,7.5) node {$T$};
\draw (8.8,3.3) node {$(1-c)T(p)+c T(q)$};
\draw (8.8,5.7) node {$c T(p) + (1-c)T(q)$};
\end{scriptsize}
\end{tikzpicture}
\caption{A lens contained in a strip, with $m$ being a $\tau$-midpoint of $p$ and $q$.}
\label{fig: lens and band}
\end{center}
\end{figure}
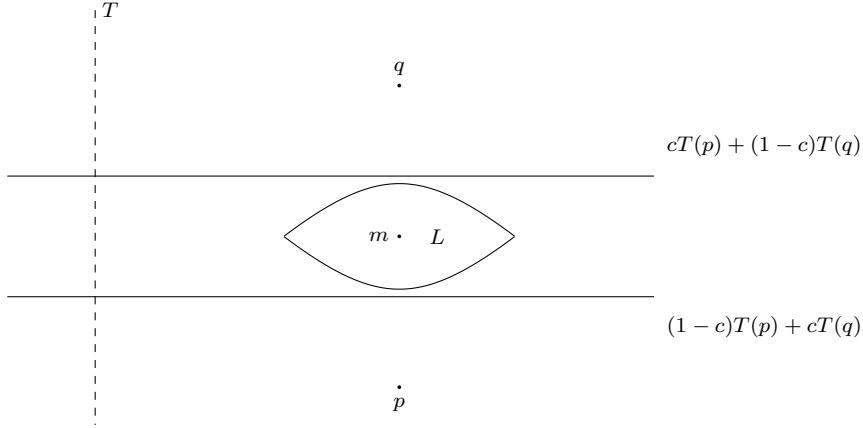
\end{rem}

\begin{proof}
In general, this proof is very similar to the one of Theorem \ref{thm: midpoints imply strictly intrinsic}. We highlight the technical differences and in turn shorten some of the unchanged parts. 
In complete analogy to the previous proof, note that if $p \leq q$ are null related, then by causal path connectedness there exists a causal curve joining them which necessarily realizes the distance. 

So let $p\ll q$ and let $\varepsilon\in(0,\hat{\varepsilon}\tau(p,q))$. 
We inductively define a function $\gamma$ on the dyadic rationals in $[0,1]$ as follows, using only $\varepsilon$-$\tau$-midpoints which satisfy \eqref{eq: time function midpoint compatible2}: let $\gamma(0)=p, \gamma(1)=q$ and $\gamma(\frac{1}{2})=m$, where $m$ is an $\frac{\varepsilon}{4}$-$\tau$-midpoint of $x$ and $z$. 
Then define $\gamma(\frac{1}{4})$ to be a $\frac{\varepsilon}{16}$-$\tau$-midpoint of $p$ and $m$ and define $\gamma(\frac{3}{4})$ to be a $\frac{\varepsilon}{16}$-$\tau$-midpoint of $m$ and $q$. 
Furthermore, let $\gamma\left(\frac{k}{2^n}\right)$ be inductively defined for all $k$, then set $\gamma\left(\frac{2k+1}{2^{n+1}}\right)$ to be a $\frac{\varepsilon}{4^{n+1}}$-$\tau$-midpoint between $\gamma\left(\frac{k}{2^n}\right)$ and $\gamma\left(\frac{k+1}{2^n}\right)$. 
In this way, we end up with a dyadic ``curve'' $\gamma:[0,1]\cap\{\frac{k}{2^n}\colon k,n\in\mb{N}\}\to X$. 
First, note that by reforming \eqref{eq: epsilon-midpoints definition}, we get 
\begin{equation}
\left| \frac{\tau(p,m)}{\tau(p,q)}-\frac{1}{2} \right|<\frac{\varepsilon}{4 \tau(p,q)} \quad\text{and}\quad \left| \frac{\tau(m,q)}{\tau(p,q)}-\frac{1}{2} \right|<\frac{\varepsilon}{4 \tau(p,q)} \, .
\end{equation}
Similarly, as $\gamma(\frac{k}{2^n})$ or $\gamma(\frac{k+1}{2^n})$ (depending on the parity of $k$) is a $\frac{\varepsilon}{4^n}$-midpoint of $\gamma(\frac{[k/2]}{2^{n-1}})$ and $\gamma(\frac{[k/2]+1}{2^{n-1}})$, we have 
\begin{equation}
\label{eq: induction step intrinsic}
\tau\left(\gamma(\frac{k}{2^n}),\gamma(\frac{k+1}{2^n})\right)>\left(\frac{1}{2}\tau\left(\gamma(\frac{[k/2]}{2^{n-1}}),\gamma(\frac{[k/2]+1}{2^{n-1}})\right)-\frac{\varepsilon}{4^n}\right) \, . 
\end{equation}
By induction, we finally arrive at 
\begin{equation}\label{eq: dyadic curve timelike intrinsic}
\tau\left(\gamma(\frac{k}{2^n}),\gamma(\frac{k+1}{2^n})\right)>
\frac{1}{2^n}\tau(p,q)-\underbrace{\sum_{i=0}^{n-1}\frac{\varepsilon}{4^{n-i}\cdot 2^i}}_{=\frac{\varepsilon}{2^n}\sum_{i=1}^n\frac{1}{2^i}}>\frac{1}{2^n}(\tau(p,q)-\varepsilon) \, . 
\end{equation}
In particular, for any dyadic rationals $t,t'$ we conclude 
\begin{equation}
\label{eq: dyadic curve timelike relation}
t<t' \iff \gamma(t)\ll\gamma(t') \, .
\end{equation} 
As before, we claim that $\gamma$ is $\alpha$-Hölder, where $\alpha:=-\log_2(1-c)$. 
Note that we obtain the same inequalities from \eqref{eq: time function midpoint compatible2} as in the previous proof, that is, for a $\hat{\varepsilon}\tau(x,z)$-$\tau$-midpoint $y$ of $x\ll z$, we have 
\begin{align*}
c(T(z)-T(x))&\leq T(y)-T(x)\leq (1-c)(T(z)-T(x)) \, , \\
c(T(z)-T(x))&\leq T(z)-T(y)\leq (1-c)(T(z)-T(x)) \, .
\end{align*}
The next step in the proof of Theorem \ref{thm: midpoints imply strictly intrinsic} relies only on the compatibility assumption of $\tau$ and $T$ and properties of $d_T$, i.e., this can be copied as is. Hence we obtain the same inequality as in \eqref{taumidptsintr:eqSubseqDyad}:
\begin{equation}
d_{T}\left(\gamma(\frac{k}{2^n}),\gamma(\frac{k+1}{2^n})\right) = \ldots 
\leq (1-c)^n(T(q)-T(p)) \label{taumidptsintr:eqSubseqDyad2} \, .
\end{equation}
Moreover, up to and including the fact that $\gamma$ is $\alpha$-Hölder, the proof is completely analogous. 
Hence, we can again apply Theorem \ref{thm: extension theorem} to obtain a continuous extension of $\gamma$ to $[0,1]$, which we shall denote by $\gamma$ as well. 
\medskip

It is left to show that the extension $\gamma : [0,1] \to X$ is still causal and $L_{\tau}(\gamma) + \varepsilon > \tau(p,q)$. 
As in the previous proof of Theorem \ref{thm: midpoints imply strictly intrinsic}, we infer that $\gamma$ is even a timelike curve.

Finally, we show that $\gamma$ has speed at least $L:=\tau(p,q)-\varepsilon$, i.e., for $t_1<t_2$, we need to show that 
\begin{equation}
 \label{eq: almost midpoints speed}
 \tau(\gamma(t_1),\gamma(t_2)) \geq L(t_2-t_1) \, . 
\end{equation}
By \eqref{eq: dyadic curve timelike intrinsic}, we have already shown \eqref{eq: almost midpoints speed} for subsequent dyadic rationals $t_1=\frac{k}{2^n},\,t_2=\frac{k+1}{2^n}$ (notice that $\frac{k+1}{2^n}-\frac{k}{2^n}=\frac{1}{2^n}$). 

Now let $t_1=\frac{k_1}{2^n},t_2=\frac{k_2}{2^n}$ be any dyadic rationals, then this follows by reverse triangle inequality, i.e., we have
\begin{equation}
\tau(\gamma(\frac{k_1}{2^n}),\gamma(\frac{k_2}{2^n}))\geq \sum_{i=k_1}^{k_2-2} \tau(\gamma(\frac{i}{2^n}),\gamma(\frac{i+1}{2^n}))\geq L(\frac{k_2}{2^n}-\frac{k_1}{2^n}) \, .
\end{equation}
For general $t_1,t_2$, this also follows by reverse triangle inequality of $\tau$, see the lower inequality in \eqref{eq: str intrinsic distance realizer}. 
The upper inequality can be obtained by flipping the inequality in \eqref{eq: induction step intrinsic} and the sign in front of $\varepsilon$ and following the proof again. 
That is, $\gamma$ is up to a factor of $1+\frac{\varepsilon}{\tau(p,q)}$ in constant speed parametrization. 
\end{proof}
\begin{rem}[Local weak causal closure]
\label{rem: on loc caus closure}
 Regarding the formulation of local weak causal closure found in \cite[Definition 2.16]{ACS20}, this seems more suitable for our setting, since we are causally path connected, but we are not (necessarily) in a strongly causal setting. 
 Clearly, local causal closure is only used when showing that $\gamma$ is a causal curve. 
 However, there might not exist a sequence of dyadic rationals $s_n \to t_1$ such that $\gamma(s_n) \leq_U \gamma(\frac{k_1}{2^n})$, i.e., while we know $\gamma(s_n) \leq \gamma(\frac{k_1}{2^n})$, we do not know whether the connecting causal curve lies inside $U$.
\end{rem}
\begin{chapt*}{Acknowledgments}
We acknowledge the kind hospitality of the Erwin Schr\"odinger International Institute for Mathematics and Physics (ESI) during the workshop \emph{Nonregular Spacetime Geometry}, where parts of this research were carried out. 

This work was supported by research grant P33594 of the Austrian Science Fund FWF.
\end{chapt*}
\bibliographystyle{alpha}
\bibliography{references}
\addcontentsline{toc}{section}{References}

\end{document}